 \documentclass[final,1p,times]{elsarticle}
\usepackage{amsmath,amssymb,url}
\usepackage{enumitem} 
\numberwithin{equation}{section}

\usepackage{amssymb}
\usepackage{amsthm}
\usepackage{hyperref}
\usepackage{amsmath,amssymb,amsopn,amsfonts,mathrsfs,amsbsy,amscd}
\usepackage{longtable}
\journal{Communications of the Korean Mathematical Society}

\newcommand{\A}{{\cal A}}

\newcommand{\e}{\epsilon}

\newtheorem{definition}{Definition}[section]
\newtheorem{theorem}{Theorem}[section]
\newtheorem{proposition}{Proposition}[section]

\newtheorem{remark}{Remark}[section]
\numberwithin{equation}{section}

\begin{document}
\bibliographystyle{amsplain}
\begin{frontmatter}
\title{A characterization of generalized derivations and biderivations on some simple algebras}

\author[label2]{Hassan Oubba}
\address[label2]{Universit\'e Hassan II de Casablanca\\
		Facult\'e des sciences A\"in Chock -  D\'epartement de Math\'ematiques et Informatique\\
		B.P. 5366 Maarif, Casablanca, Maroc\\
        Laboratoire d'analyse Mathématique, Algèbre et Applications\\e-mail:
		 HASSAN.OUBBA@univh2c.ma}

\begin{abstract}

In this paper, we characterize the generalized derivations and biderivations of the simple seven-dimensional anticommutative algebra \(D\) and the seven-dimensional central simple commutative algebra \(C\). Furthermore, we prove that any local generalized derivation of these algebras is inherently a generalized derivation..\\
\textbf{Mathematics Subject Classification }15A03,
15A69, 17A30, 17B40, 17B63, 17D10.
\end{abstract}
\begin{keyword}
Simple algebra, Derivation, generalized derivation, local generalized derivation, $2$-local generalized derivation  biderivation, Basis of identities. 
\end{keyword}
\end{frontmatter}
\section*{Introduction} \label{Introduction}
The notion of generalized derivation is due to Brešar \cite{br0}. Let $A$ be an algebra. A linear mapping $F : A \to A$ is called a \textbf{generalized derivation} if there exists a derivation $D : A \to A$ such that
$F(xy) = F(x)y + xD(y) \quad \text{for all } x, y \in A$.
A linear map $T : A \to A$ is called a \textbf{left centralizer} in case $T(xy) = T(x)y \quad \text{for all } x, y \in A$.
If $F$ is a generalized derivation determined by a derivation $D$, then $F - D$ is a left centralizer. Hence a generalized derivation is a sum of a derivation and a left centralizer. We say $F$ is \textbf{inner} if there exist $a, b \in A$ such that $F(x) = ax + xb \quad \text{for all } x \in A$. Obviously, if the algebra is unital then $F(x) = F(1)x + D(x)$ for all $x \in A$; in this case $F$ is inner if and only if $D$ is inner \cite{br0}. For results concerning generalized derivations we refer the reader to \cite{br0, al, hvala, lee, lee1}.

Over the past three decades, a significant line of inquiry has been to determine the relationship between the existence of such maps satisfying certain algebraic identities and the commutativity of the algebra $A$. This research program was initiated by the celebrated result of Posner \cite{pos}, who proved that the existence of a nonzero centralizing derivation on a prime ring forces the ring to be commutative. Posner's seminal work has inspired a vast body of literature investigating how derivations and generalized derivations influence the commutativity of prime and semiprime rings. Numerous authors have extended this line of investigation, a sample of which can be found in \cite{al, as, lee, lee1, hvala}, and the references therein. Kadison in \cite{kad} introduced and investigated a notion of local derivations. He proved that each
continuous local derivation from a von Neumann algebra into its dual Banach bimodule is a deriva-
tion. Semrl introduced a similar notion of 2-local derivations. He proved that any 2-local derivation
of the algebra $B(H)$ of all bounded linear operators on the infinite-dimensional separable Hilbert
space $H$ is a derivation \cite{sem}. After, numerous new results related to the description of local and
2-local derivations (see \cite{ayo2, ayo3, Lar, oub, oub2, oub4}). \\
The study of  biderivations traces its origins back to associative ring theory \cite{Bres1, BresMar2}. In particular, it was established that all biderivations on noncommutative prime rings are inner. Biderivations serve as valuable tools in analyzing commuting maps, with various studies determining commuting maps first by obtaining all biderivations of the algebras under investigation, as observed in \cite{bn, houb, oub, oub1, oub2, oub3, oub4,  Tang, Wang1, Wang2}.\\

In this work, we  study generalized derivations and biderivations of the simple finite-dimensional algebras
without finite basis of identities, constructed by Kislitsin in \cite{kis1, kis2}.\\ 
This paper is organized as follows.
In Section 2, we characterize all generalized derivations, local (and 2-local) generalized derivations, and biderivations of the simple algebra \(D\). The final section is dedicated to characterizing these same maps on the algebra \(C\). 

\section{Preliminaries}
\begin{definition}
   Let $A$ be an algebra. The set 
$$\Gamma(A)=\left\lbrace  \gamma \in End(A) \hspace{0.2 cm} : \hspace{0.2 cm} \gamma(x.y)=\gamma(x).y=x.\gamma(y), \hspace{0.2 cm} \forall x,y \in A \right\rbrace $$
is called the cenroid of $A$.
\end{definition}
\begin{definition}
\label{def1}
Let $(A,.)$ be an algebra. A linear map $D : A \rightarrow A$ is called a derivation, if 
$$D(x.y)=D(x).y+x.D(y)$$
for all $x,y \in A$.
\end{definition}
 We denote by $Der(A)$ the set of all derivations on $A$.
\begin{definition}
\label{def2}
Let $(A,.)$ be an algebra. A linear map $F : A \rightarrow A$ is called a generalized derivation, if there exist a derivation $D$ on $A$ such that
$$F(x.y)=F(x).y+x.D(y)$$
for all $x,y \in A$.
\end{definition}
Clearly, any derivation is a generalized derivation but the converse is, in general, not true. Hence, it is
 clear by the above definition that a generalized derivation composes a derivation and an element of the centroid.
 \begin{definition}
      Let $(\A,.)$ be a nonassociative algebra and let $\delta: \A \times \A \rightarrow \A$ be a bilinear map. 
 
 $\delta$ is called a biderivation of $(\A,.)$ if for any $x, y \in \A,$  $\delta(x,\,)$ and $\delta(\,,y)$ are derivations of $(\A,.), $ that means
 
 $$\delta(x,y.y')= \delta(x,y).y' + y.\delta(x,y')$$
 
 and 
 
 $$\delta(x.x',y)= \delta(x,y).x' + x.\delta(x',y),$$
 
 $\forall\, x,x',y,y' \in \A.$
 \end{definition}
  Denote by $BDer(A)$ the set of all biderivations on $A$ which is clearly a vector space.\\
A $\delta \in BDer(A)$ is called symmetric if $\delta(x,y)=\delta(y,x)$ for all $x,y \in A$, and is called skew-symmetric if $\delta(x,y)=-\delta(y,x)$ for all $x,y \in A$. Denote by $BDer_+(A)$ and $BDer_-(A)$ the subspace of all symmetric biderivations and all skew-symmetric biderivations on $A$ respectively.\\
For any $\delta \in BDer(A)$, we define two bilinear maps by 
$$\delta^+(x,y)=\delta(x,y)+\delta(y,x), \hspace{0.3 cm}\delta^-(x,y)=\delta(x,y)-\delta(y,x).$$
It is easy to see that $\delta^+ \in BDer_+(A)$ and $\delta^- \in BDer_-(A)$. Since $\delta =\frac{1}{2}(\delta^+ + \delta^-)$ it follows that
$$BDer(A)=BDer_+(A) \oplus BDer_-(A).$$
To characterize $BDer(A)$, we only need to characterize $BDer_+(A)$ and $BDer_-(A)$.
\section{A simple finite-dimensional algebra without finite basis of identities}
Let $D=\langle e,v_{1},v_{2},e_{11},e_{12},e_{22},p\rangle_{\mathbb{F}}$ be an algebra over a field $\mathbb{F}$ of characteristic zero, whose non-zero multiplication table for the basis elements is given by
$$v_{i}e_{ij}=-e_{ij}v_{i}=v_{j}, \quad v_{2}p=-pv_{2}=e, \quad v_{i}e=-ev_{i}=v_{i},$$
$$e_{ij}e=-ee_{ij}=e_{ij}, \quad pe=-ep=p$$
It is known that \(D\) is a simple anticommutative algebra lacking a finite basis of identities \cite{kis2}. Let \(a\) be an arbitrary element of \(D\), expressed as
$$a=a_{1}e+a_{2}v_{1}+a_{3}v_{2}+a_{4}e_{11}+a_{5}e_{12}+a_{6}e_{22}+a_{7}p$$
for some elements $a_{1}$, $a_{2}$, $a_{3}$, $a_{4},$ $a_{5}$, $a_{6}$, and $a_{7}$ in $\mathbb{F}$. Throughout the paper, let
$$\overline{a}=(a_{1},a_{2},a_{3},a_{4},a_{5},a_{6},a_{7})^{T}.$$
Conversely, if $v=(a_{1},a_{2},a_{3},a_{4},a_{5},a_{6},a_{7})^{T}$ is a column vector with $a_{1}$, $a_{2}$, $a_{3}$, $a_{4}$, $a_{5}$, $a_{6}$, and $a_{7}$ in $\mathbb{F}$
then, throughout the paper, we will denote by the element
$$a_{1}e+a_{2}v_{1}+a_{3}v_{2}+a_{4}e_{11}+a_{5}e_{12}+a_{6}e_{22}+a_{7}p;$$
i.e.,
$$\widehat{v}=a_{1}e+a_{2}v_{1}+a_{3}v_{2}+a_{4}e_{11}+a_{5}e_{12}+a_{6}e_{22}+a_{7}p.$$
Let $A$ be an algebra. A linear map $D:A \rightarrow A$ is called a derivation if
for any two elements $x,y\in A$
$$D(xy)=D(x)y+xD(y)$$
Our principal tool for the description of generalized derivations of $D$ is the following
proposition.

\begin{proposition}\label{pr1} \cite{far}
     A linear map $D: D \rightarrow D$ is a derivation if and only if the matrix $B$ of $D$ in the
standard basis  has the following form:
\begin{equation}
\label{form1}
B=\begin{pmatrix}
0 & 0 & 0 & 0 & 0 & 0 & 0 \\
0 & \alpha & 0 & 0 & 0 & 0 & 0 \\
0 & 0 & \alpha+\beta & 0 & 0 & 0 & 0 \\
0 & 0 & 0 & 0 & 0 & 0 & 0 \\
0 & 0 & 0 & 0 & \beta & 0 & 0 \\
0 & 0 & 0 & 0 & 0 & 0 & 0 \\
0 & 0 & 0 & 0 & 0 & 0 & -(\alpha+\beta)
\end{pmatrix}.
\end{equation}
Here the action of $D$ corresponds to multiplying the matrix by a column on the right.
\end{proposition}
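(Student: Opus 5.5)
The plan is a direct computation: impose the derivation identity on pairs of basis vectors and read off the linear constraints on the matrix entries. Write $D(x)=\widehat{B\overline{x}}$ for an arbitrary $7\times 7$ matrix $B=(b_{ij})$, so that $D(e)=\sum_i b_{i1}u_i$, and so on, where $u_1,\dots,u_7$ denotes the basis $e,v_1,v_2,e_{11},e_{12},e_{22},p$. Since both sides of $D(xy)=D(x)y+xD(y)$ are bilinear in $(x,y)$ and $D$ is anticommutative, it suffices to check the identity on pairs $(u_i,u_j)$ with $i\le j$.

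The first step uses the element $e$, which acts diagonally. Under $L_e(x)=ex$ we have
\[
e\mapsto 0,\qquad v_i\mapsto -v_i,\qquad e_{ij}\mapsto -e_{ij},\qquad p\mapsto -p.
\]
Thus $D$ is graded by $\mathrm{ad}\,e$ into the $0$-eigenspace $\langle e\rangle$ and the $(-1)$-eigenspace spanned by the other six vectors. Applying $D$ to $e\cdot e=0$ gives $D(e)e=0$ (anticommutativity), so $D(e)\in\langle e\rangle$. Applying $D$ to $ex=-x$ for each $x\ne e$ gives
\[
-D(x)=D(e)x+eD(x).
\]
Write $D(e)=\lambda e$. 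Then the relation becomes $-D(x)=-\lambda x+eD(x)$. Decomposing $D(x)$ into its $e$-component and the remaining components shows that the $e$-coefficient of $D(x)$ is zero and $\lambda x=0$. Hence $\lambda=0$, so $D(e)=0$, and $D$ preserves the span $W$ of the six vectors other than $e$.

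The second step determines $D$ on $W$ using the products $v_ie_{ij}=v_j$ and $v_2p=e$, together with the products that vanish, such as $v_1v_2$, $v_1p$, $e_{ij}e_{kl}$, $e_{ij}p$ and $v_2e_{11}$. Each relation gives linear equations in the $b_{ij}$. For example, $D(v_1e_{11})=D(v_1)$ forces $D(v_1)e_{11}+v_1D(e_{11})=D(v_1)$. The only nonzero product of the form $(\cdot)e_{11}$ is $v_1e_{11}=v_1$, and $v_1$ times an element of $W$ lies in $\langle v_1,v_2\rangle$. From this one expects to conclude that $D(e_{11})$ has $v_1$ and $v_2$ components determined by the relation, and that $D(v_1)\in\langle v_1\rangle$ up to correction terms. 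Running through all basis products systematically, in the order $v_1e_{11}$, $v_1e_{12}$, $v_2e_{22}$, $v_2p$ and then the zero products, should kill every off-diagonal entry. Setting $\alpha=b_{22}$ and $\beta=b_{55}$, the relations should then give $b_{44}=b_{66}=0$, $b_{33}=\alpha+\beta$ from $v_1e_{12}=v_2$, and $b_{77}=-(\alpha+\beta)$ from $v_2p=e$ together with $D(e)=0$.

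The main obstacle is the bookkeeping in the second step. Off-diagonal entries mapping into $\langle v_1,v_2\rangle$ could a priori survive, for instance a $v_1$-component in $D(e_{12})$. These must be eliminated through the vanishing products such as $v_2e_{11}=0$, $v_1v_2=0$ and $v_1e_{22}=0$, and it is at this point that care is needed. The converse direction is routine: check that the diagonal map in \eqref{form1} satisfies the identity on each nonzero basis product. This works because every structure constant relates basis vectors whose weights add, namely $\alpha+0=\alpha$, $\alpha+\beta$, $(\alpha+\beta)-(\alpha+\beta)=0$, and $e$ has weight $0$.
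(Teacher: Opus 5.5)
The paper gives no proof of this proposition; it is quoted from \cite{far}. So the question is whether your computation stands on its own. The plan is sound and the statement is true, but Step 1 rests on a false inference. In an anticommutative algebra, $D(x)x+xD(x)=D(x)x-D(x)x=0$ holds for every $x$. Applying the derivation to $e\cdot e=0$ therefore gives no information: it does not yield $D(e)e=0$, and it does not give $D(e)\in\langle e\rangle$.

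Here is what the products with $e$ actually give. From $D(x)=D(xe)=D(x)e+xD(e)$ you get $xD(e)=\varepsilon_x e$ for each basis vector $x\neq e$, where $\varepsilon_x$ is the $e$-coefficient of $D(x)$. Working this out, $D(e)=c\,p$ for an arbitrary scalar $c$, the $e$-coefficient of $D(v_2)$ equals $c$ (because $v_2p=e$), and all other $e$-coefficients vanish. This one-parameter freedom is compatible with every relation involving $e$. So $D(e)=0$ cannot be settled in Step 1, and your later use of it (for $b_{77}$) is unsupported as the argument is ordered.

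The repair is to use a product inside $W$. From $v_1e_{12}=v_2$, together with $b_{12}=b_{15}=0$, you get $D(v_2)=b_{45}v_1+(b_{22}+b_{55})v_2$. This has no $e$-component, hence $c=0$. Alternatively, the $e_{11}$-component of the relation for $v_2e_{11}=0$ is $-b_{13}=0$.

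Second, Step 2 is only announced (``one expects'', ``should kill''). For a purely computational statement, that bookkeeping is the proof. It does go through once $D(e)=0$ is in hand:
\begin{enumerate}
\item $(v_1,e_{11})$ gives $b_{44}=0$, $b_{32}=b_{54}$ and $b_{42}=b_{52}=b_{62}=b_{72}=0$.
\item $(v_1,e_{12})$ gives $b_{33}=b_{22}+b_{55}$, $b_{23}=b_{45}$ and $b_{43}=\dots=b_{73}=0$.
\item $(v_2,e_{22})$ gives $b_{66}=b_{23}=b_{76}=0$, so $b_{45}=0$.
\item $(v_2,p)$ gives $b_{77}=-b_{33}$ and $b_{67}=0$.
\item $v_1p=0$ gives $b_{32}=b_{47}=b_{57}=0$, hence $b_{54}=0$.
\item $v_1e_{22}=0$ gives $b_{46}=b_{56}=0$.
\item $v_2e_{11}=0$ and $v_2e_{12}=0$ give $b_{64}=b_{74}=b_{65}=b_{75}=0$.
\item The zero products $e_{11}e_{12}$, $e_{11}e_{22}$, $e_{12}e_{22}$, $e_{11}p$, $e_{22}p$ kill $b_{24},b_{25},b_{34},b_{26},b_{35},b_{27},b_{36},b_{37}$.
\end{enumerate}

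You single out the $v_1$-component $b_{25}$ of $D(e_{12})$. It is removed only by $e_{11}e_{12}=0$, through $e_{11}v_1=-v_1$. The products you propose, $v_2e_{11}$, $v_1v_2$ and $v_1e_{22}$, do not involve $b_{25}$ at all.

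Your converse direction, the weight argument, is correct.
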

\subsection{ Generalized derivations on $D$}
In this subsection we characterize generalized derivation and local (2-local) generalized derivations of $D$.\\
We give here our first result.
\begin{theorem}\label{th2}
    A linear map $F: D \rightarrow D$ is a generalized derivation if and only if the matrix of $F$ in the
standard basis  has the following form:
$$
\begin{pmatrix}
\lambda & 0 & 0 & 0 & 0 & 0 & 0 \\
0 & \gamma & 0 & 0 & 0 & 0 & 0 \\
0 & 0 & \gamma+\mu-\lambda & 0 & 0 & 0 & 0 \\
0 & 0 & 0 & \lambda & 0 & 0 & 0 \\
0 & 0 & 0 & 0 & \mu & 0 & 0 \\
0 & 0 & 0 & 0 & 0 & \lambda & 0 \\
0 & 0 & 0 & 0 & 0 & 0 & 3\lambda-(\gamma+\mu)
\end{pmatrix}\in M_7(\mathbb{F}).
$$
Here the action of $D$ corresponds to multiplying the matrix by a column on the right.
\end{theorem}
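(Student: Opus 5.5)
The plan is to split $F$ into a derivation and a centroid element, and then use Proposition \ref{pr1} for the derivation part. Let $F$ be a generalized derivation with associated derivation $\Delta$ (I rename it to avoid a clash with the algebra $D$), and put $T=F-\Delta$. From $F(xy)=F(x)y+x\Delta(y)$ and $\Delta(xy)=\Delta(x)y+x\Delta(y)$ we get $T(xy)=T(x)y$ for all $x,y$, so $T$ is a left centralizer.

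First I will show that $T$ lies in the centroid $\Gamma(D)$. This uses the anticommutativity of $D$:
$$T(xy)=-T(yx)=-T(y)x=xT(y).$$

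Next I will show that $\Gamma(D)=\mathbb{F}\,\mathrm{id}$, using only the element $e$.
\begin{itemize}[label={}]
\item From the multiplication table, $xe=x$ for every $x\in\{v_1,v_2,e_{11},e_{12},e_{22},p\}$ and $ee=0$. Hence the right multiplication $R_e$ is the identity on $W=\langle v_1,v_2,e_{11},e_{12},e_{22},p\rangle$ and has kernel exactly $\mathbb{F}e$.
\item Since $T(e)e=T(ee)=0$, we get $T(e)=\lambda e$ for some $\lambda\in\mathbb{F}$.
\item For $x\in W$, we then have $T(x)=T(xe)=xT(e)=\lambda\,xe=\lambda x$.
\end{itemize}
So $T=\lambda\,\mathrm{id}$. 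This is the only step with real content, and the distinguished element $e$, which acts as a right identity on a complement of its own span, makes it short. Simplicity of $D$ is not even needed.

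Then I combine this with Proposition \ref{pr1}. The matrix of $F=\Delta+\lambda\,\mathrm{id}$ is
$$\mathrm{diag}\bigl(\lambda,\ \alpha+\lambda,\ \alpha+\beta+\lambda,\ \lambda,\ \beta+\lambda,\ \lambda,\ \lambda-(\alpha+\beta)\bigr).$$
Setting $\gamma=\alpha+\lambda$ and $\mu=\beta+\lambda$ gives $\alpha+\beta+\lambda=\gamma+\mu-\lambda$ and $\lambda-(\alpha+\beta)=3\lambda-(\gamma+\mu)$. This is exactly the stated form.

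For the converse, take any matrix of the stated form with parameters $\lambda,\gamma,\mu$. Put $\alpha=\gamma-\lambda$ and $\beta=\mu-\lambda$. By Proposition \ref{pr1}, the matrix of form (\ref{form1}) with these $\alpha,\beta$ defines a derivation $\Delta$, and $F=\Delta+\lambda\,\mathrm{id}$. Then
$$F(xy)=\Delta(x)y+x\Delta(y)+\lambda xy=F(x)y+x\Delta(y),$$
so $F$ is a generalized derivation. Since the reparametrization $(\alpha,\beta,\lambda)\leftrightarrow(\gamma,\mu,\lambda)$ is bijective, the two directions together give the theorem.
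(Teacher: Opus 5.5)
Your proof is correct, but it is organized differently from the paper's. Your converse is the same as the paper's ``if'' part.

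For the forward direction, the paper works in coordinates. It takes the matrix of $F$ and the matrix of the associated derivation in the form (\ref{form1}), and reads off entries from the generalized-derivation identity on products of basis elements with $e$. It obtains $\Gamma(D)=\mathbb{F}\,\mathrm{id}$ only afterwards, as a corollary in the Remark. You reverse the logic: you split $F=\Delta+T$, use anticommutativity to put the left centralizer $T$ in the centroid, compute $\Gamma(D)=\mathbb{F}\,\mathrm{id}$ via $R_e$, and only then apply Proposition \ref{pr1}. This is shorter, avoids entrywise bookkeeping, and makes clear exactly which structure of $D$ is used.

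It also avoids a slip in the paper's write-up. The paper evaluates the identity on the products $xe$. Since $\Delta(e)=0$, the relation $F(xe)=F(x)e+x\Delta(e)$ only says that $F(x)$ has no $e$-component for $x\in W$. The remaining relations the paper records (the diagonal entries and the other vanishing entries) actually come from $F(ex)=F(e)x+e\Delta(x)$ together with $ex=-x$. In your argument, it is the centroid step $T(xe)=xT(e)$ that makes products with $e$ on the right informative.

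You could even skip that step. The identities $T(ex)=T(e)x=\lambda\,ex$ and $ex=-x$ for $x\in W$ give $T(x)=\lambda x$ from the left-centralizer property alone.
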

\begin{proof}
 Let $F$ be a linear map with the matrix in the basis $\lbrace e,v_1,v_2,e_{11},e_{12},e_{22},p\rbrace$ has the following form 
 $$\begin{pmatrix}
\lambda & 0 & 0 & 0 & 0 & 0 & 0 \\
0 & \gamma & 0 & 0 & 0 & 0 & 0 \\
0 & 0 & \gamma+\mu-\lambda & 0 & 0 & 0 & 0 \\
0 & 0 & 0 & \lambda & 0 & 0 & 0 \\
0 & 0 & 0 & 0 & \mu & 0 & 0 \\
0 & 0 & 0 & 0 & 0 & \lambda & 0 \\
0 & 0 & 0 & 0 & 0 & 0 & 3\lambda-(\gamma+\mu)
\end{pmatrix}$$
Then, for any $x\in D$, $F(x)=\lambda x +D(x)$ where $D$ is a linear map with the matrix in the basis $\lbrace e,v_1,v_2,e_{11},e_{12},e_{22},p\rbrace$ has the following form  
$$\begin{pmatrix}
0 & 0 & 0 & 0 & 0 & 0 & 0 \\
0 & \gamma-\lambda & 0 & 0 & 0 & 0 & 0 \\
0 & 0 & \gamma+\mu-2\lambda & 0 & 0 & 0 & 0 \\
0 & 0 & 0 & 0 & 0 & 0 & 0 \\
0 & 0 & 0 & 0 & \mu-\lambda & 0 & 0 \\
0 & 0 & 0 & 0 & 0 & 0 & 0 \\
0 & 0 & 0 & 0 & 0 & 0 & 2\lambda-(\gamma+\mu)
\end{pmatrix}$$
By Proposition \ref{pr1}, $D$ is a derivation on $D$. In the other hand $F(xy)=\lambda(xy)+D(xy)=(\lambda x)y+D(x)y+xD(y)=F(x)y+xD(y)$ for all $x,y \in D$. Therefore, $F$ is generalized derivation on $D$ associated to the derivation $D$. \\
 The proof of the only part is carried out by checking the generalized derivation property on the algebra $D$.
Let $F$ be a generalized derivation on $D$ and $A=(a_{i,j})_{i,j=1}^{7}$ be it's matrix. By the definition of generalized derivation and Proposition\ref{pr1}, there exists a matrix $B$ of the form \ref{form1}  such that,
$$\widehat{A\overline{xy}}=\widehat{A\overline{x}}y+x\widehat{B\overline{y}}, \quad \forall x,y \in D.$$
From the equalities $\widehat{A\overline{ee}}=\widehat{A\overline{e}}e+e\widehat{B\overline{e}}$ we have 
$$b_{21}=b_{31}=b_{41}=b_{51}=b_{61}=b_{71}=0.$$
From the equalities $\widehat{A\overline{v_1e}}=\widehat{A\overline{v_1}}e+v_1\widehat{B\overline{e}}$ we deduce
$$b_{22}=b_{11}+\alpha, \quad b_{12}=b_{32}=b_{42}=b_{52}=b_{62}=b_{72}=0.$$
From the equalities $\widehat{A\overline{v_2e}}=\widehat{A\overline{v_2}}e+v_2\widehat{B\overline{e}}$ we get
$$b_{33}=b_{11}+\alpha+\beta, \quad b_{13}=b_{23}=b_{43}=b_{53}=b_{63}=b_{73}=0.$$
From the equalities $\widehat{A\overline{e_{11}e}}=\widehat{A\overline{e_{11}}}e+e_{11}\widehat{B\overline{e}}$ we deduce
$$b_{44}=b_{11}, \quad b_{14}=b_{24}=b_{34}=b_{54}=b_{64}=b_{74}=0.$$
From the equalities $\widehat{A\overline{e_{12}e}}=\widehat{A\overline{e_{12}}}e+e_{12}\widehat{B\overline{e}}$ we have
$$b_{55}=b_{11}+\beta, \quad b_{15}=b_{25}=b_{35}=b_{45}=b_{65}=b_{75}=0.$$
From the equalities $\widehat{A\overline{e_{22}e}}=\widehat{A\overline{e_{22}}}e+e_{22}\widehat{B\overline{e}}$ we deduce
$$b_{66}=b_{11}, \quad b_{16}=b_{26}=b_{36}=b_{46}=b_{56}=b_{76}=0.$$
From the equalities $\widehat{A\overline{pe}}=\widehat{A\overline{p}}e+p\widehat{B\overline{e}}$ we deduce
$$b_{77}=b_{11}-(\alpha+\beta), \quad b_{17}=b_{27}=b_{37}=b_{47}=b_{57}=b_{67}=0.$$
As a result, by setting $\lambda=b_{11}, \gamma=\alpha+\lambda, \mu=\beta+\lambda$ we get the matrix from Theorem \ref{th2}. The proof is complete.    
\end{proof}
\begin{remark}
    It is easy to see that any generalized derivation of $D$ is a sum of $\lambda id$ and a derivation. Then, we can use the above theorem for prove that $\Gamma(D)=\lbrace f :D\to D\, : \, f(xy)=f(x)y=xf(y), \quad x,y \in D\rbrace = \lbrace \lambda id \,: \, \lambda \in \mathbb{F}\rbrace$.
\end{remark}

Let $A$ be an algebra. A linear map $\Delta:A \rightarrow A$ is called a local generalized derivation if, for any element
$x\in A,$ there exists a generalized derivation $F_x:A \rightarrow A$ such that $\Delta(x)=F_x(x)$.
\begin{theorem}\label{th3}
    Each local generalized derivation on the simple algebra $D$ is a generalized derivation.
\end{theorem}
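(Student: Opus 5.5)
By Theorem \ref{th2}, every generalized derivation of $D$ is diagonal in the basis $\{e,v_1,v_2,e_{11},e_{12},e_{22},p\}$, with diagonal
$$(\lambda,\ \gamma,\ \gamma+\mu-\lambda,\ \lambda,\ \mu,\ \lambda,\ 3\lambda-\gamma-\mu)$$
for three free parameters $\lambda,\gamma,\mu\in\mathbb{F}$. The plan has two steps. First, use local generalized derivation property on basis vectors to show that the matrix of $\Delta$ is diagonal. Second, use it on well-chosen sums of basis vectors to force the diagonal entries to satisfy the linear relations defining the family in Theorem \ref{th2}.

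\emph{Step 1: diagonality.} Let $\Delta$ be a local generalized derivation with matrix $A=(a_{ij})$. For each basis vector $u$ there is a generalized derivation $F_u$ with $\Delta(u)=F_u(u)$. Since $F_u$ is diagonal, $F_u(u)$ is a scalar multiple of $u$. Hence every column of $A$ has at most its diagonal entry nonzero, so $A=\mathrm{diag}(d_1,\dots,d_7)$.

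\emph{Step 2: the relations.} The target family is cut out by four linear relations:
$$d_4=d_1,\qquad d_6=d_1,\qquad d_3=d_2+d_5-d_1,\qquad d_7=3d_1-d_2-d_5.$$
To obtain them, apply locality to a single test element
$$x=e+v_1+v_2+e_{11}+e_{12}+e_{22}+p.$$
There is a generalized derivation $F_x$ with parameters $\lambda,\gamma,\mu$ such that $\Delta(x)=F_x(x)$. Comparing coordinates gives
$$d_1=\lambda,\quad d_2=\gamma,\quad d_3=\gamma+\mu-\lambda,\quad d_4=\lambda,\quad d_5=\mu,\quad d_6=\lambda,\quad d_7=3\lambda-\gamma-\mu.$$
Eliminating $\lambda,\gamma,\mu$ yields exactly the four relations above. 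Therefore $A$ has the form in Theorem \ref{th2}, and $\Delta$ is a generalized derivation.

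The only point needing care is whether a single test vector with all coordinates nonzero suffices. It does: each coordinate of $F_x(x)$ is the corresponding diagonal entry of $F_x$ multiplied by a nonzero coefficient of $x$, so the coordinates of $\Delta(x)$ determine the entries of $F_x$ directly. Step 1 is also essential: without diagonality one could not read $d_i$ off $\Delta(x)$ coordinatewise. If the coefficients of $x$ were not all $1$, one would simply divide each coordinate by the corresponding coefficient.

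The same approach should also handle the $2$-local case. There one would show that the $F_{x,y}$ are forced to coincide on a fixed element with pairwise distinct coordinates, as is standard.
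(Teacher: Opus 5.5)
Your argument is correct and follows the paper's proof. Locality at each basis vector forces the matrix of $\Delta$ to be diagonal, and locality at test vectors with nonzero coordinates then forces the diagonal entries into the family of Theorem \ref{th2}; the only difference is that the paper uses two test vectors, $e+e_{11}+e_{22}$ and $e+v_1+v_2+e_{12}+p$, while your single vector $e+v_1+v_2+e_{11}+e_{12}+e_{22}+p$ gives $\Delta=F_x$ in one step.
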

\begin{proof}
     Let $\Delta$ be a local generalized derivation on $D,$ and let $A=(a_{i,j})_{i,j=1}^{7}$ be the matrix of $\Delta$. Then
$$\Delta(e)=\lambda^ee, \quad \Delta(v_{1})=\gamma^{v_{1}}v_{1}, \quad \Delta(v_{2})=(\gamma^{v_2}+\mu^{v_{2}}-\lambda^{v_{2}})v_{2},\quad \Delta(e_{11})=\lambda^{e_{11}}e_{11},$$ 
$$ \Delta(e_{1,2})=\mu^{e_{12}}e_{1,2},\quad \Delta(e_{22})=\lambda^{e_{22}}e_{22}, \quad \Delta(p)=(3\lambda^p-(\gamma^{p}+\mu^{p}))p,
 $$
and the remaining components of the matrix $A$ are equal to zero. At the same time,
\begin{equation}
    \label{equ1} \Delta(e+e_{11}+e_{22})=\Delta(e)+\Delta(e_{11})+\Delta(e_{22})
\end{equation}
and
\begin{equation}
    \label{equ1} \Delta(e+e_{11}+e_{22})=\lambda^{e+e_{11}+e_{22}}e+\lambda^{e+e_{11}+e_{22}}e_{11}+\lambda^{e+e_{11}+e_{22}}e_{22}
\end{equation}
Comparing coefficients at the basis element in (\ref{equ1}), we have $\lambda^{e+e_{11}+e_{22}}=\lambda^e=\lambda^{e_{11}}=\lambda^{e_{22}}.$ 
In the other hand
\begin{equation}\label{equ2}
\Delta(e+v_{1}+v_{2}+e_{1,2}+p)=\Delta(e)+\Delta(v_{1})+\Delta(v_{2})+\Delta(e_{1,2})+\Delta(p)
\end{equation}
and
\begin{eqnarray*}
 \Delta(e+v_{1}+v_{2}+e_{1,2}+p)&=&\lambda^{e+v_{1}+v_{2}+e_{1,2}+p} e+\gamma^{e+v_{1}+v_{2}+e_{1,2}+p}v_1\\
 &&+(\gamma^{e+v_{1}+v_{2}+e_{1,2}+p}+\mu^{e+v_{1}+v_{2}+e_{1,2}+p}-\lambda^{e+v_{1}+v_{2}+e_{1,2}+p})v_2\\
 &&+\mu^{e+v_{1}+v_{2}+e_{1,2}+p}e_{12}\\
 &&+(3\lambda^{e+v_{1}+v_{2}+e_{1,2}+p}-(\gamma^{e+v_{1}+v_{2}+e_{1,2}+p}+\mu^{e+v_{1}+v_{2}+e_{1,2}+p})p
\end{eqnarray*}
Comparing coefficients at the basis element in (\ref{equ2}), we obtain
\begin{eqnarray*}
   && \lambda^{e+v_{1}+v_{2}+e_{1,2}+p}=\lambda^e, \quad \gamma^{e+v_{1}+v_{2}+e_{1,2}+p}=\gamma^{v_1}, \quad \mu^{e+v_{1}+v_{2}+e_{1,2}+p}=\mu^{e_{12}}, \\
   && \gamma^{e+v_{1}+v_{2}+e_{1,2}+p}+\mu^{e+v_{1}+v_{2}+e_{1,2}+p}-\lambda^{e+v_{1}+v_{2}+e_{1,2}+p}=\gamma^{v_2}+\mu^{v_2}-\lambda^{v_2},\\
   &&3\lambda^{e+v_{1}+v_{2}+e_{1,2}+p}-(\mu^{e+v_{1}+v_{2}+e_{1,2}+p}+\mu^{e+v_{1}+v_{2}+e_{1,2}+p})=3\lambda^p-(\gamma^p+\mu^p).
\end{eqnarray*}
This implies that
$$\gamma^{v_2}+\mu^{v_2}-\lambda^{v_2}=\gamma^{v_1}+\mu^{e_{12}}-\lambda^e, \quad 3\lambda^p-(\gamma^p+\mu^p)=3\lambda^e-(\gamma^{v_1}+\mu^{e_{12}}).$$
The linear operator $\Delta$ has the matrix
$$A=
\begin{pmatrix}
\lambda^e & 0 & 0 & 0 & 0 & 0 & 0 \\
0 & \gamma^{v_{1}} & 0 & 0 & 0 & 0 & 0 \\
0 & 0 & \gamma^{v_{1}}+\mu^{e_{12}}-\lambda^e & 0 & 0 & 0 & 0 \\
0 & 0 & 0 & \lambda^e & 0 & 0 & 0 \\
0 & 0 & 0 & 0 & \mu^{e_{12}} & 0 & 0 \\
0 & 0 & 0 & 0 & 0 & \lambda^e & 0 \\
0 & 0 & 0 & 0 & 0 & 0 & 3\lambda^e-(\gamma^{v_1}+\mu^{e_{11}})
\end{pmatrix}.
$$
Hence, by Theorem \ref{th2}, $\Delta$ is a derivation. This completes the proof.
\end{proof}

We give another characterization of generalized derivations on the algebra $D$ in the following theorem.
Let $A$ be an algebra.
A (not necessarily linear) map $\Delta:A \rightarrow A$ is called a 2-local generalized derivation
if, for all elements $x$, $y\in A$ there exists a generalized derivation $F_{x,y}:A\rightarrow A$ such that $\Delta(x)=F_{x,y}(x)$ and
$\Delta(y)=F_{x,y}(y)$.
\begin{theorem} \label{th4}
 Each 2-local generalized derivation on the simple algebra $D$ is a generalized derivation.   
\end{theorem}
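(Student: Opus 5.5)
By Theorem \ref{th2}, every generalized derivation of $D$ is diagonal in the standard basis $\lbrace e,v_1,v_2,e_{11},e_{12},e_{22},p\rbrace$. Its diagonal is
$$(\lambda,\ \gamma,\ \gamma+\mu-\lambda,\ \lambda,\ \mu,\ \lambda,\ 3\lambda-\gamma-\mu),$$
so it is determined by the triple $(\lambda,\gamma,\mu)$, which can be read off from the coefficients at $e$ (or $e_{11}$, $e_{22}$), $v_1$ and $e_{12}$. The plan is to pair every $x$ with one fixed "generic" element $w$ that detects all three parameters. This pins down a single generalized derivation $F$ such that $\Delta(x)=F(x)$ for all $x$, and linearity of $\Delta$ then comes for free.

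\textbf{Step 1 (a fixed $F$ from a generic element).} Take
$$w=e+v_1+e_{12}.$$
Let $F_{w,w}=F$ be the generalized derivation from the definition, with parameters $(\lambda_0,\gamma_0,\mu_0)$. Then
$$\Delta(w)=\lambda_0 e+\gamma_0 v_1+\mu_0 e_{12},$$
and every generalized derivation $G$ with $G(w)=\Delta(w)$ has the same parameters $(\lambda_0,\gamma_0,\mu_0)$. Since the parameters determine $G$ through the matrix of Theorem \ref{th2}, we get $G=F$.

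\textbf{Step 2 (agreement on all of $D$).} For arbitrary $x\in D$, choose $F_{x,w}$. Then $F_{x,w}(w)=\Delta(w)$, so Step 1 gives $F_{x,w}=F$. Hence
$$\Delta(x)=F_{x,w}(x)=F(x).$$
Thus $\Delta=F$ on all of $D$, so $\Delta$ is linear and is a generalized derivation.

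\textbf{Main obstacle.} The only delicate point is the choice of $w$. Its coefficients must let us recover $\lambda$, $\gamma$ and $\mu$ independently, so the map $(\lambda,\gamma,\mu)\mapsto G(w)$ has to be injective. This is exactly why $w$ includes $e$ (or one of $e_{11},e_{22}$), $v_1$ and $e_{12}$, each with nonzero coefficient. Choosing, say, only $v_2$ and $p$ would determine just $\gamma+\mu-\lambda$ and $3\lambda-\gamma-\mu$, which is not enough. With the choice above, injectivity is immediate from the diagonal form in Theorem \ref{th2}.
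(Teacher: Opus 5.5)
Your proof is correct, and it uses the same overall template as the paper: fix an anchor element, then show that the generalized derivation attached to the pair $(x,\text{anchor})$ does not depend on $x$, so that $\Delta$ coincides with one generalized derivation. The difference is in the anchor, and it matters.

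The paper takes the anchor $a$ to be a single basis element. It then deduces $M^{x,a}=M^{y,a}$ from $F_{x,a}(a)=F_{y,a}(a)$. For a basis vector this equality fixes only one diagonal entry. For example, $a=e$ gives $\lambda^{x,e}=\lambda^{y,e}$ and says nothing about $\gamma^{x,e}$ or $\mu^{x,e}$. The list of equalities written in the paper also combines conditions from different choices of $a$, which belong to different families $F_{x,a}$. So that step does not follow as written.

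Your anchor $w=e+v_1+e_{12}$ makes the map $(\lambda,\gamma,\mu)\mapsto G(w)=\lambda e+\gamma v_1+\mu e_{12}$ injective on the three-parameter family of Theorem \ref{th2}. That injectivity is exactly what forces $F_{x,w}=F$ for every $x$. In this sense your argument supplies the ingredient the paper's proof is missing, and it gets linearity of $\Delta$ automatically. Your only external input is Theorem \ref{th2}, and you use it legitimately.
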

\begin{proof}
Let $\Delta$ be a 2-local generalized derivation of $D$ and $a\in \lbrace e,v_1,v_2,e_{11},e_{12},e_{22},p\rbrace$. Then, by definition, for every element $x \in D$, there exists a generalized derivation $F_{x,a}$ of $D$  such that
\begin{center}
$\Delta(x)=F_{x,a}(x), \hspace{0.2 cm} \Delta(a)=F_{x,a}(a)$.
\end{center}
By Theorem \ref{th2}, the matrix $M^{x,e_i}$ of the generalized derivation $F_{x,a}$ has the following matrix form:
$$M^{x,a}=\begin{pmatrix}
\lambda^{x,a} & 0 & 0 & 0 & 0 & 0 & 0 \\
0 & \gamma^{x,a} & 0 & 0 & 0 & 0 & 0 \\
0 & 0 & \gamma^{x,a}+\mu^{x,a}-\lambda^{x,a} & 0 & 0 & 0 & 0 \\
0 & 0 & 0 & \lambda^{x,a} & 0 & 0 & 0 \\
0 & 0 & 0 & 0 & \mu^{x,a} & 0 & 0 \\
0 & 0 & 0 & 0 & 0 & \lambda^{x,a} & 0 \\
0 & 0 & 0 & 0 & 0 & 0 &3\lambda^{x,a} -(\gamma^{x,a}+\mu^{x,a})
\end{pmatrix}$$
Let $y \in D$. Then there exists a generalized derivation $F_{y,e_i}$ of $D$ such that 
\begin{center}
$\Delta(y)=F_{y,a}(y), \hspace{0.3 cm} \Delta(a)=F_{y,a}(a).$
\end{center}
By Theorem \ref{th2}, the matrix $M^{y,a}$ of the derivation $F_{y,a}$ has the following matrix form:
$$M^{y,a}=\begin{pmatrix}
\lambda^{y,a} & 0 & 0 & 0 & 0 & 0 & 0 \\
0 & \gamma^{y,a} & 0 & 0 & 0 & 0 & 0 \\
0 & 0 & \gamma^{y,a}+\mu^{y,a}-\lambda^{y,a} & 0 & 0 & 0 & 0 \\
0 & 0 & 0 & \lambda^{y,a} & 0 & 0 & 0 \\
0 & 0 & 0 & 0 & \mu^{y,a} & 0 & 0 \\
0 & 0 & 0 & 0 & 0 & \lambda^{y,a} & 0 \\
0 & 0 & 0 & 0 & 0 & 0 &3\lambda^{y,a} -(\gamma^{y,a}+\mu^{y,a})
\end{pmatrix}$$
Since $F_{x,a}(a)=\Delta(a)=F_{y,a}(a)$, we have
\begin{gather*}
		  	 \begin{cases}
		  	\lambda^{x,e}=\lambda^{y,e} ;&\\
            \lambda^{x,e_{11}}=\lambda^{y,e_{11}} ;&\\
            \lambda^{x,e_{22}}=\lambda^{y,e_{22}} ;&\\
		  	\,\gamma^{x,v_1}=\gamma^{y,v_1};& \\
            \,\gamma^{x,v_2}+\mu^{x,v_2}-\lambda^{x,v_2}=\gamma^{y,v_2}+\mu^{y,v_2}-\lambda^{y,v_2};& \\
            \,\gamma^{x,v_2}=\gamma^{y,v_2};& \\
		  	\,\mu^{x,e_{12}}=\mu^{y,e_{12}}   ;&\\ 
            \,3\lambda^{x,p}-(\gamma^{x,p}+\mu^{x,p})=3\lambda^{y,p}-(\gamma^{y,p}+\mu^{y,p})
		  	 \end{cases}
\end{gather*}
That is
$$M^{y,a}=M^{x,a},$$
Therefore, for any $x\in D$, 
$$\Delta(x)=F_{y,a}(x),$$
that is $F_{x,a}$ does not depend on $x$. Hence, $\Delta$ is a generalized derivation by Theorem \ref{th2}. This completes the proof.
\end{proof}
\subsection{Biderivations of $D$}
In this section we prove that any biderivation of $D$ is the zero mao.
\begin{theorem}\label{thb1}
    The algebra $D$ has non zero skew-symmetric biderivation.
\end{theorem}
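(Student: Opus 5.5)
We read the statement as saying that $D$ has \emph{no} nonzero skew-symmetric biderivation. This matches the announcement at the start of the subsection that every biderivation of $D$ is zero. The plan is to show that any $\delta\in BDer_-(D)$ vanishes. The tool is Proposition \ref{pr1}, applied to the derivations $\delta(x,\cdot)$.

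\textbf{Step 1: describe $\delta$ by two linear functionals.} For each fixed $x\in D$, the map $\delta(x,\cdot)$ is a derivation of $D$. By Proposition \ref{pr1} there are scalars $\alpha(x),\beta(x)\in\mathbb{F}$ such that $\delta(x,\cdot)$ has the diagonal matrix (\ref{form1}) with these parameters. Since $\delta$ is bilinear, $\alpha(x)$ is the coefficient of $v_1$ in $\delta(x,v_1)$, and $\beta(x)$ is the coefficient of $e_{12}$ in $\delta(x,e_{12})$. Hence $\alpha,\beta:D\to\mathbb{F}$ are linear. Concretely:
\begin{itemize}[label={}]
\item $\delta(x,e)=\delta(x,e_{11})=\delta(x,e_{22})=0$;
\item $\delta(x,v_1)=\alpha(x)v_1$ and $\delta(x,v_2)=(\alpha(x)+\beta(x))v_2$;
\item $\delta(x,e_{12})=\beta(x)e_{12}$ and $\delta(x,p)=-(\alpha(x)+\beta(x))p$.
\end{itemize}
So it suffices to show that $\alpha$ and $\beta$ vanish on the seven basis vectors.

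\textbf{Step 2: use skew-symmetry on pairs of basis vectors.} For basis vectors $a\neq b$, the vector $\delta(a,b)$ is a multiple of $b$, while $-\delta(b,a)$ is a multiple of $a$. Since $a$ and $b$ are linearly independent, skew-symmetry forces both multiples to be zero.
\begin{itemize}[label={}]
\item The pair $(x,v_1)$ with $x\neq v_1$ gives $\alpha(x)=0$.
\item The pair $(x,e_{12})$ with $x\neq e_{12}$ gives $\beta(x)=0$.
\item The pair $(v_1,v_2)$ gives $\alpha(v_1)+\beta(v_1)=0$. Combined with $\beta(v_1)=0$, this yields $\alpha(v_1)=0$.
\item The pair $(e_{12},v_2)$ gives $\alpha(e_{12})+\beta(e_{12})=0$. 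Combined with $\alpha(e_{12})=0$, this yields $\beta(e_{12})=0$.
\end{itemize}

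\textbf{Step 3: conclude.} By Steps 1 and 2, $\alpha=\beta=0$ on a basis, hence on all of $D$. Therefore $\delta(x,\cdot)=0$ for every $x$, i.e.\ $\delta=0$.

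The main obstacle is the diagonal entries $\alpha(v_1)$ and $\beta(e_{12})$. For these, the naive argument with $a=b$ only gives $\delta(a,a)=0$, which is not enough on its own. The fix is to use a third basis vector, here $v_2$, whose eigenvalue $\alpha+\beta$ couples the two parameters; this is what Step 2 does. Otherwise the argument is a direct bookkeeping check against the multiplication-free description given by Proposition \ref{pr1}.
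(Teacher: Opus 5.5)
Your proof is correct and follows essentially the paper's route: you parametrize each derivation $\delta(a,\cdot)$ by $(\alpha(a),\beta(a))$ via Proposition \ref{pr1}, then kill these parameters using $\delta(a,b)=-\delta(b,a)$ on pairs of basis vectors, and your observation that for $a\neq b$ the two sides lie in the independent lines $\mathbb{F}b$ and $\mathbb{F}a$ organizes the paper's case-by-case checks more cleanly. One small correction to your closing remark: $\delta(v_1,v_1)=\alpha(v_1)v_1$ and $\delta(e_{12},e_{12})=\beta(e_{12})e_{12}$, so $\delta(a,a)=0$ (characteristic $0$) already gives $\alpha(v_1)=\beta(e_{12})=0$ directly, which is how the paper does it, and the detour through $v_2$ is valid but unnecessary.
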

\begin{proof}
     Let $\delta^-$ be a skew-symmetric biderivation of the  algebras  $D$ and $x,y \in D$, such that $\overline{x}=(x_1,x_2,x_3,x_4,x_5,x_6,x_7)$ and $\overline{y}=(y_1,y_2,y_3,y_4,y_5,y_6,y_7)$. Then, by the bilinearity of $\delta^-$, we obtain, 
$$\delta^-(x,y)=x_1\delta^-_e(y)+x_2\delta^-_{v_1}(y)+x_3\delta^-_{v_2}(y)+x_{4}\delta^-_{e_{11}}(y)+x_{5}\delta^-_{e_{12}}(y)+x_{6}\delta^-_{e_{22}}(y)+x_{7}\delta^-_p(y).$$ 
By Proposition \ref{pr1}, the matrix $D_{a}$ of $\delta^-_{a}$, for $a=e,v_1,v_2,e_{11},e_{12},e_{22},p$ in the basis $\lbrace e,v_1,v_2,e_{11},e_{12},e_{22},p \rbrace$ is of the form
$$D_{a}=\begin{pmatrix}
0 & 0 & 0 & 0 & 0 & 0 & 0 \\
0 & \alpha^a & 0 & 0 & 0 & 0 & 0 \\
0 & 0 & \alpha^a+\beta^a & 0 & 0 & 0 & 0 \\
0 & 0 & 0 & 0 & 0 & 0 & 0 \\
0 & 0 & 0 & 0 & \beta^a & 0 & 0 \\
0 & 0 & 0 & 0 & 0 & 0 & 0 \\
0 & 0 & 0 & 0 & 0 & 0 & -(\alpha^a+\beta^a)
\end{pmatrix}.$$
Since $\delta^-$ is skew-symmetric, then by $\delta^-(a,a)=0$, for $a=e,v_1,v_2,e_{11},e_{12},e_{22},p$ we have 
\begin{equation}\label{eq1}
    \alpha^{v_1}=\beta^{e_{12}}=0, \quad \alpha^{v_2}=-\beta^{v_2}, \quad \alpha^p=-\beta^p.
\end{equation}
The equalities $\delta^-(e,a)=-\delta^-(a,e)$ for $a=v_1,e_{12}$ we deduce that 
\begin{equation}\label{eq2}
    \alpha^e=\beta^e=0.
\end{equation}
The equalities $\delta^-(v_1,a)=-\delta^-(a,v_1)$ for $a=e_{12},p$ we get that 
\begin{equation}
    \label{eq3} \beta^{v_1}=0, \quad \alpha^{v_1}=-\beta^{v_1} \quad \alpha^{e_{12}}=\alpha^{p}=0.
\end{equation}

The equalities $\delta^-(v_2,a)=-\delta^-(a,v_2)$ for $a=e_{12},p$ we get that 
\begin{equation}
    \label{eq4} \beta^{v_2}=0, \quad \alpha^{v_2}=-\beta^{v_2}.
\end{equation}
The equalities $\delta^-(e_{11},a)=-\delta^-(a,e_{11})$ for $a=e_{12},p$ we deduce that 
\begin{equation} \label{eq5}
   \beta^{e_{11}}=0, \quad \alpha^{e_{11}}=-\beta^{e_{11}}.
\end{equation}
The equalities $\delta^-(e_{12},a)=-\delta^-(a,e_{12})$ for $a=v_1,e_{22},p$ we deduce that 
\begin{equation}\label{eq6}
 \alpha^{e_{12}}=\beta^{e_{22}}=\beta^p=0,\quad    \alpha^{e_{12}}=-\beta^{e_{12}}.
\end{equation}
The equalities $\delta^-(e_{22},a)=-\delta^-(a,e_{22})$, for $a=v_1,e_{12}$, we have that 
\begin{equation}\label{eq7}
 \alpha^{e_{22}}=\beta^{e_{22}}.
\end{equation}
By comparing equations (\ref{eq1})-(\ref{eq7}), deduce that $$D_e=D_{v_1}=D_{v_2}=D_{e_{11}}=D_{e_{12}}=D_{e_{22}}D_p=\begin{pmatrix}
0 & 0 & 0 & 0 & 0 & 0 & 0 \\
0 & 0 & 0 & 0 & 0 & 0 & 0 \\
0 & 0 & 0 & 0 & 0 & 0 & 0 \\
0 & 0 & 0 & 0 & 0 & 0 & 0 \\
0 & 0 & 0 & 0 & 0 & 0 & 0 \\
0 & 0 & 0 & 0 & 0 & 0 & 0 \\
0 & 0 & 0 & 0 & 0 & 0 & 0
\end{pmatrix}$$
Therefore, $\delta^-$ is the zero map.
\end{proof}
\begin{theorem}\label{thb2}
    The algebra $D$ has non zero symmetric biderivation.
\end{theorem}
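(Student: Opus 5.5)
The statement (despite its wording, in parallel with Theorem \ref{thb1}) asserts that every symmetric biderivation of $D$ is zero. I would argue exactly as in the skew-symmetric case.

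Let $\delta^+$ be a symmetric biderivation. For each basis element $a\in\lbrace e,v_1,v_2,e_{11},e_{12},e_{22},p\rbrace$ the map $\delta^+_a=\delta^+(a,\cdot)$ is a derivation. By Proposition \ref{pr1} its matrix is $\mathrm{diag}(0,\alpha^a,\alpha^a+\beta^a,0,\beta^a,0,-(\alpha^a+\beta^a))$. By bilinearity, $\delta^+$ is then determined by the fourteen scalars $\alpha^a,\beta^a$. Symmetry gives $\delta^+(a,b)=\delta^+(b,a)$ for all pairs of basis elements $a,b$. Since each $\delta^+_a(b)$ is a scalar multiple of $b$, for $a\neq b$ the left side lies in $\mathbb{F}b$ and the right side lies in $\mathbb{F}a$. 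Hence both vanish, so $\delta^+_a(b)=0$ whenever $a\ne b$.

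Next I read off the consequences. Taking $b=v_1$ and some $a\neq v_1$ gives $\alpha^a=0$. Taking $b=e_{12}$ and some $a\neq e_{12}$ gives $\beta^a=0$. So $\alpha^a=\beta^a=0$ for every $a\notin\lbrace v_1,e_{12}\rbrace$. For $a=v_1$, testing on $b=e_{12}$ gives $\beta^{v_1}=0$. For $a=e_{12}$, testing on $b=v_1$ gives $\alpha^{e_{12}}=0$.

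The remaining unknowns are $\alpha^{v_1}$ and $\beta^{e_{12}}$. For these I test against $v_2$ (or $p$), which is distinct from both. We get $\delta^+_{v_1}(v_2)=(\alpha^{v_1}+\beta^{v_1})v_2=\alpha^{v_1}v_2$, and this must be $0$. Similarly $\delta^+_{e_{12}}(v_2)=\beta^{e_{12}}v_2=0$. Hence all parameters vanish, every $\delta^+_a=0$, and $\delta^+=0$.

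The main point needing care is the observation that symmetry forces the off-diagonal values to be zero: $\delta^+_a(b)$ is proportional to $b$ while $\delta^+_b(a)$ is proportional to $a$. Once this is stated, the rest is bookkeeping. Combining this with Theorem \ref{thb1} and the decomposition $BDer(D)=BDer_+(D)\oplus BDer_-(D)$ then gives $BDer(D)=0$. In fact the same argument also covers the skew-symmetric case directly.
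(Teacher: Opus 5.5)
Your proof is correct and takes the same route as the paper. You write each $\delta^+_a$ in the diagonal form of Proposition \ref{pr1}, then kill the parameters $\alpha^a,\beta^a$ by comparing $\delta^+(a,b)$ with $\delta^+(b,a)$ on pairs of distinct basis elements; the paper uses $p$ where you use $v_2$ to dispose of $\alpha^{v_1}$ and $\beta^{e_{12}}$. Your uniform observation that $\delta^+(a,b)\in\mathbb{F}b\cap\mathbb{F}a=0$ for $a\neq b$ is a cleaner way to organize the paper's case-by-case comparisons, and it also shows why the paper's sign slip is harmless: its symmetric proof writes $\delta^+(e,a)=-\delta^+(a,e)$, but since both sides lie in independent lines, the sign makes no difference.
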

\begin{proof}
     Let $\delta^+$ be a skew-symmetric biderivation of the  algebras  $D$ and $x,y \in D$, such that $\overline{x}=(x_1,x_2,x_3,x_4,x_5,x_6,x_7)$ and $\overline{y}=(y_1,y_2,y_3,y_4,y_5,y_6,y_7)$. Then, by the bilinearity of $\delta^+$, we obtain, 
$$\delta^+(x,y)=x_1\delta^+_e(y)+x_2\delta^+_{v_1}(y)+x_3\delta^+_{v_2}(y)+x_{4}\delta^+_{e_{11}}(y)+x_{5}\delta^+_{e_{12}}(y)+x_{6}\delta^+_{e_{22}}(y)+x_{7}\delta^+_p(y).$$ 
By Proposition \ref{pr1}, the matrix $D_{a}$ of $\delta^+_{a}$, for $a=e,v_1,v_2,e_{11},e_{12},e_{22},p$ in the basis $\lbrace e,v_1,v_2,e_{11},e_{12},e_{22},p \rbrace$ is of the form
$$D_{a}=\begin{pmatrix}
0 & 0 & 0 & 0 & 0 & 0 & 0 \\
0 & \alpha^a & 0 & 0 & 0 & 0 & 0 \\
0 & 0 & \alpha^a+\beta^a & 0 & 0 & 0 & 0 \\
0 & 0 & 0 & 0 & 0 & 0 & 0 \\
0 & 0 & 0 & 0 & \beta^a & 0 & 0 \\
0 & 0 & 0 & 0 & 0 & 0 & 0 \\
0 & 0 & 0 & 0 & 0 & 0 & -(\alpha^a+\beta^a)
\end{pmatrix}.$$
The equalities $\delta^+(e,a)=-\delta^+(a,e)$ for $a=v_1,e_{12}$ we deduce that 
\begin{equation}\label{eq21}
    \alpha^e=\beta^e=0.
\end{equation}
The equalities $\delta^+(v_1,a)=-\delta^+(a,v_1)$ for $a=e_{12},p$ we get that 
\begin{equation}
    \label{eq31} \beta^{v_1}=0, \quad \alpha^{v_1}=-\beta^{v_1} \quad \alpha^{e_{12}}=\alpha^{p}=0.
\end{equation}

The equalities $\delta^+(v_2,a)=-\delta^+(a,v_2)$ for $a=e_{12},p$ we get that 
\begin{equation}
    \label{eq41} \beta^{v_2}=0, \quad \alpha^{v_2}=-\beta^{v_2}.
\end{equation}
The equalities $\delta^+(e_{11},a)=-\delta^+(a,e_{11})$ for $a=e_{12},p$ we deduce that 
\begin{equation} \label{eq51}
   \beta^{e_{11}}=0, \quad \alpha^{e_{11}}=-\beta^{e_{11}}.
\end{equation}
The equalities $\delta^+(e_{12},a)=-\delta^+(a,e_{12})$ for $a=v_1,e_{22},p$ we deduce that 
\begin{equation}\label{eq61}
 \alpha^{e_{12}}=\beta^{e_{22}}=\beta^p=0,\quad    \alpha^{e_{12}}=-\beta^{e_{12}}.
\end{equation}
The equalities $\delta^+(e_{22},a)=-\delta^+(a,e_{22})$, for $a=v_1,e_{12}$, we have that 
\begin{equation}\label{eq71}
 \alpha^{e_{22}}=\beta^{e_{22}}.
\end{equation}
By comparing equations (\ref{eq1})-(\ref{eq7}), deduce that $$D_e=D_{v_1}=D_{v_2}=D_{e_{11}}=D_{e_{12}}=D_{e_{22}}D_p=\begin{pmatrix}
0 & 0 & 0 & 0 & 0 & 0 & 0 \\
0 & 0 & 0 & 0 & 0 & 0 & 0 \\
0 & 0 & 0 & 0 & 0 & 0 & 0 \\
0 & 0 & 0 & 0 & 0 & 0 & 0 \\
0 & 0 & 0 & 0 & 0 & 0 & 0 \\
0 & 0 & 0 & 0 & 0 & 0 & 0 \\
0 & 0 & 0 & 0 & 0 & 0 & 0
\end{pmatrix}$$
Therefore, $\delta^+$ is the zero map.
\end{proof}
Since, $BDer(A)=BDer_+(A) \oplus BDer_-(A).$ Then, we have the following theorem
\begin{theorem}
  The algebra $D$ has non zero symmetric biderivation.   
\end{theorem}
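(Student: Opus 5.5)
The plan is to obtain the statement directly from the two preceding results and the decomposition
$$BDer(D)=BDer_+(D)\oplus BDer_-(D)$$
from the Preliminaries, without any new computation. I read the statement in the same sense as Theorems \ref{thb1} and \ref{thb2}, whose proofs both end with ``$\delta^\pm$ is the zero map.'' On that reading, the claim is that the only biderivation of $D$, symmetric or not, is the zero one. I note that the literal wording (``has non zero symmetric biderivation'') would assert the opposite; the plan below proves the zero-map reading.

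\textbf{Step 1 (decomposition).} Given any $\delta\in BDer(D)$, form
$$\delta^+(x,y)=\delta(x,y)+\delta(y,x),\qquad \delta^-(x,y)=\delta(x,y)-\delta(y,x).$$
Then $\delta^+\in BDer_+(D)$, $\delta^-\in BDer_-(D)$, and $\delta=\tfrac12(\delta^++\delta^-)$. Division by $2$ is allowed because $\mathbb{F}$ has characteristic zero.

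\textbf{Step 2 (the two parts vanish).} Theorem \ref{thb1} gives $\delta^-=0$ and Theorem \ref{thb2} gives $\delta^+=0$. Hence $\delta=0$, and in particular every symmetric biderivation of $D$ is zero.

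\textbf{Main obstacle: Theorem \ref{thb2}.} Its proof as written reuses the skew-symmetry relations $\delta(a,b)=-\delta(b,a)$, which is not the right condition for $\delta^+$. I would re-derive it with the symmetric relations $\delta^+(a,b)=\delta^+(b,a)$, using the derivation matrices of Proposition \ref{pr1} for each $\delta^+_a=\delta^+(a,\cdot)$. The key point is that every derivation annihilates $e$, $e_{11}$ and $e_{22}$. The basis pairs to compare are these:
\begin{itemize}
\item $\delta^+(e,v_1)=\delta^+(v_1,e)$ forces $\alpha^e=0$, and $\delta^+(e,e_{12})=\delta^+(e_{12},e)$ forces $\beta^e=0$. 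The same comparisons with $e_{11}$ and $e_{22}$ in place of $e$ kill $\alpha^{e_{11}},\beta^{e_{11}},\alpha^{e_{22}},\beta^{e_{22}}$.
\item $\delta^+(v_1,e_{12})=\delta^+(e_{12},v_1)$ gives $\beta^{v_1}e_{12}=\alpha^{e_{12}}v_1$, so $\beta^{v_1}=\alpha^{e_{12}}=0$.
\item $\delta^+(v_1,v_2)=\delta^+(v_2,v_1)$ gives $(\alpha^{v_1}+\beta^{v_1})v_2=\alpha^{v_2}v_1$, so $\alpha^{v_1}=\alpha^{v_2}=0$.
\item $\delta^+(v_1,p)=\delta^+(p,v_1)$ gives $\alpha^p=0$.
\item $\delta^+(e_{12},v_2)=\delta^+(v_2,e_{12})$ gives $\beta^{e_{12}}=\beta^{v_2}=0$.
\item $\delta^+(e_{12},p)=\delta^+(p,e_{12})$ gives $\beta^p=0$.
\end{itemize}
In each case the two sides lie along distinct basis vectors, so both coefficients vanish. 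All $D_a$ are then zero, which is the content of Theorem \ref{thb2}. Combined with Theorem \ref{thb1} and the decomposition of Step 1, this gives $\delta=0$ for every $\delta\in BDer(D)$.
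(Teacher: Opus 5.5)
Your proposal is correct and follows the paper's route: the paper gets this statement directly from $BDer(D)=BDer_+(D)\oplus BDer_-(D)$ together with Theorems \ref{thb1} and \ref{thb2}, exactly as in your Steps 1--2, and its intended meaning is, as you read it, that every biderivation of $D$ is zero. Your re-derivation of Theorem \ref{thb2} with the correct relation $\delta^+(a,b)=\delta^+(b,a)$ is also sound and fixes the paper's misuse of the skew-symmetric relation there; because every comparison sets distinct basis vectors against each other, all coefficients vanish whichever sign is used.
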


\section{The simple algebra $C$}
Let $C=\langle 1,v_{1},v_{2},e_{11},e_{12},e_{22},p\rangle_{\mathbb{F}}$ be an algebra over a field $\mathbb{F}$ of characteristic $0$, where $1$ is unity and nonzero
products of basis elements 
\begin{equation} \label{fr2}
\{1,v_{1},v_{2},e_{11},e_{12},e_{22},p\}
\end{equation}
other than $1$ are defined as follows
$$v_{i}e_{ij}=e_{ij}v_{i}=v_{j}, \quad v_{2}p=pv_{2}=1. $$
Then the algebra $C$ is a simple central commutative algebra with no finite basis of identities \cite{kis1}. Let $a$ be an
element in $C$. Then we can write
$$a=a_{1}1+a_{2}v_{1}+a_{3}v_{2}+a_{4}e_{11}+a_{5}e_{12}+a_{6}e_{22}+a_{7}p$$
for some elements $a_{1}$, $a_{2}$, $a_{3}$, $a_{4},$ $a_{5}$, $a_{6}$, and $a_{7}$ in $\mathbb{F}$. Throughout the paper, let
$$\overline{a}=(a_{1},a_{2},a_{3},a_{4},a_{5},a_{6},a_{7})^{T}.$$
Conversely, if $v=(a_{1},a_{2},a_{3},a_{4},a_{5},a_{6},a_{7})^{T}$ is a column vector with $a_{1}$, $a_{2}$, $a_{3}$, $a_{4}$, $a_{5}$, $a_{6}$, and $a_{7}$ in $\mathbb{F}$
then, throughout the paper, we will denote by the element
$$a_{1}1+a_{2}v_{1}+a_{3}v_{2}+a_{4}e_{11}+a_{5}e_{12}+a_{6}e_{22}+a_{7}p;$$
i.e.,
$$\widehat{v}=a_{1}e+a_{2}v_{1}+a_{3}v_{2}+a_{4}e_{11}+a_{5}e_{12}+a_{6}e_{22}+a_{7}p.$$
Our principal tool for the description of generalized derivations and local (2-local) generalized derivations of $C$ is the following
proposition.

\begin{proposition}\label{pr2}\cite{far}
     A linear map $D: C \rightarrow C$ is a derivation if and only if the matrix $B$ of $D$ in the
standard basis  has the following form:
\begin{equation}
\label{form2}
B=\begin{pmatrix}
0 & 0 & 0 & 0 & 0 & 0 & 0 \\
0 & \alpha & 0 & 0 & 0 & 0 & 0 \\
0 & 0 & \alpha+\beta & 0 & 0 & 0 & 0 \\
0 & 0 & 0 & 0 & 0 & 0 & 0 \\
0 & 0 & 0 & 0 & \beta & 0 & 0 \\
0 & 0 & 0 & 0 & 0 & 0 & 0 \\
0 & 0 & 0 & 0 & 0 & 0 & -(\alpha+\beta)
\end{pmatrix}.
\end{equation}
Here the action of $D$ corresponds to multiplying the matrix by a column on the right.
\end{proposition}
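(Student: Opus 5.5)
Since $1$ is the unity of $C$, the proof splits into a sufficiency check, which uses a grading, and a necessity argument, which compares coefficients on the multiplication table in the same way as the proof of Theorem \ref{th2}.

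\emph{Sufficiency.} Assign weights to the basis (\ref{fr2}):
$$w(1)=0,\ w(v_1)=\alpha,\ w(v_2)=\alpha+\beta,\ w(e_{11})=0,\ w(e_{12})=\beta,\ w(e_{22})=0,\ w(p)=-(\alpha+\beta).$$
The matrix $B$ of (\ref{form2}) is the diagonal operator $u\mapsto w(u)u$ on basis vectors. Every nonzero product of two basis elements is again a basis element whose weight is the sum of the weights of the factors:
$$v_1e_{11}=v_1,\quad v_1e_{12}=v_2,\quad v_2e_{22}=v_2,\quad v_2p=1,$$
and the products with $1$ are trivial. Hence for basis elements $x,y$ we get
$$D(xy)=(w(x)+w(y))\,xy=D(x)y+xD(y).$$
When $xy=0$, both sides vanish, because $D(x)y$ and $xD(y)$ are scalar multiples of $xy$. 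Bilinearity then gives the Leibniz rule on all of $C$.

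\emph{Necessity.} Let $D$ be a derivation with matrix $(b_{ij})$. From $D(1)=D(1\cdot 1)=2D(1)$ we get $D(1)=0$, so the first column vanishes. Next I would apply the Leibniz rule to every pair of basis elements, using commutativity to keep only the $28$ unordered pairs, and read off coefficients. The most informative relations come from two sources.

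\textbf{Zero products.} These are $e_{ij}e_{kl}=0$, $pp=0$, $pe_{ij}=0$, $v_iv_j=0$, $v_1p=0$, $v_1e_{22}=0$ and $v_2e_{11}=v_2e_{12}=0$. For such a pair, $D(x)y+xD(y)=0$, and since multiplication by a fixed basis element has a very sparse image, each relation forces many entries to vanish.
\begin{itemize}
\item From $e_{11}^2=0$ we get $2e_{11}D(e_{11})=0$, and $e_{11}$ only multiplies $1$ and $v_1$ nontrivially, so the $1$ and $v_1$ components of $D(e_{11})$ vanish.
\item From $p^2=0$, the $1$ and $v_2$ components of $D(p)$ vanish.
\item Relations such as $pe_{11}=0$, $v_1v_1=0$ and $v_2v_2=0$ then kill the remaining off-diagonal entries step by step.
\end{itemize}

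\textbf{The four nonzero products.} These fix the diagonal.
\begin{itemize}
\item $v_1e_{11}=v_1$ forces the $(4,4)$ entry $b_{44}=0$.
\item $v_2e_{22}=v_2$ forces $b_{66}=0$.
\item $v_1e_{12}=v_2$ gives $b_{33}=b_{22}+b_{55}$. Setting $\alpha=b_{22}$ and $\beta=b_{55}$ yields $b_{33}=\alpha+\beta$.
\item $v_2p=1$ together with $D(1)=0$ gives $b_{77}=-b_{33}=-(\alpha+\beta)$.
\end{itemize}

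The main obstacle is bookkeeping. One has to verify that the zero-product relations eliminate \emph{every} off-diagonal entry, including awkward ones such as the $e_{11}$- or $e_{22}$-component of $D(e_{12})$. These survive many relations because the $e_{ij}$ multiply each other trivially. I would handle them first using relations with $v_1$ or $v_2$, for example $v_2e_{12}=0$ and $v_1e_{22}=0$, where a stray $e_{ij}$-component of $D(e_{12})$ would produce a nonzero $v$-term that cannot be cancelled. I would organize the computation column by column, as in the proof of Theorem \ref{th2}, so that no entry is missed.
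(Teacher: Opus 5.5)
Your sufficiency argument via the weight grading is correct and complete. The paper gives no proof of this proposition and simply quotes it from \cite{far}, so the necessity direction rests entirely on your sketch. That sketch has a gap: it relies on the claim that the seventeen zero-product relations eliminate every off-diagonal entry, while the four nonzero products only fix the diagonal. This claim is false.

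Let $E$ be the linear map with $E(e_{12})=e_{11}$ and $E=0$ on the other basis vectors. The element $e_{11}$ multiplies only $1$ and $v_1$ nontrivially, and $e_{12}\cdot 1$, $e_{12}v_1$ are nonzero products. Hence $E(x)y+xE(y)=0$ for \emph{every} pair of basis elements with $xy=0$. So no bookkeeping with zero products can remove the $e_{11}$-component of $D(e_{12})$, which is exactly the entry you singled out. Neither relation you propose involves it: in $v_2e_{12}=0$ it contributes a multiple of $v_2e_{11}=0$, and $D(e_{12})$ does not occur in $v_1e_{22}=0$. The same obstruction arises for the maps $e_{11}\mapsto e_{12}$, $e_{22}\mapsto p$, $p\mapsto e_{22}$, and $v_1\mapsto e_{22},\ v_2\mapsto -e_{12}$; each passes all zero-product tests. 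Your claim that a stray term "cannot be cancelled" is also inaccurate: in $v_2e_{12}=0$ an $e_{22}$-component of $D(e_{12})$ \emph{is} cancelled by the $v_1$-component of $D(v_2)$. You need $v_2e_{11}=0$ first to kill the latter.

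The repair is to compare \emph{all} coefficients in the four nonzero-product relations, not only the diagonal ones.
\begin{itemize}
\item From $v_1e_{11}=v_1$ you get $D(v_1)=D(v_1)e_{11}+v_1D(e_{11})\in\mathrm{span}\{v_1,v_2,e_{11}\}$. So the $e_{22}$-component of $D(v_1)$ vanishes, and the $v_2$-component of $D(v_1)$ equals the $e_{12}$-component of $D(e_{11})$. Since $v_1p=0$ kills the $v_2$-component of $D(v_1)$, the $e_{12}$-component of $D(e_{11})$ vanishes. Then $v_1v_2=0$ kills the $e_{12}$-component of $D(v_2)$.
\item The $v_1$-coefficient of $v_1e_{12}=v_2$ says that the $v_1$-component of $D(v_2)$ equals the sum of the $1$- and $e_{11}$-components of $D(e_{12})$. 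Together with $v_2e_{11}=0$ and $e_{12}^2=0$, this kills the $e_{11}$-component of $D(e_{12})$.
\item The $1$-coefficient of $v_2e_{22}=v_2$ equates the $p$-component of $D(e_{22})$ with the $1$-component of $D(v_2)$, which is $0$ by $v_2e_{11}=0$.
\item The $v_2$-coefficient of $v_2p=1$ equates the $e_{22}$-component of $D(p)$ with minus its $1$-component, which is $0$ by $p^2=0$.
\end{itemize}
With these additions, your coefficient comparison over the $28$ pairs yields exactly the form (\ref{form2}).
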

\begin{theorem}\label{th21}
    A linear map $F: C \rightarrow C$ is a generalized derivation if and only if the matrix of $F$ in the
standard basis (\ref{fr2}) has the following form:
$$
\begin{pmatrix}
\lambda & 0 & 0 & 0 & 0 & 0 & 0 \\
0 & \gamma & 0 & 0 & 0 & 0 & 0 \\
0 & 0 & \gamma+\mu-\lambda & 0 & 0 & 0 & 0 \\
0 & 0 & 0 & \lambda & 0 & 0 & 0 \\
0 & 0 & 0 & 0 & \mu & 0 & 0 \\
0 & 0 & 0 & 0 & 0 & \lambda & 0 \\
0 & 0 & 0 & 0 & 0 & 0 & 3\lambda-(\gamma+\mu)
\end{pmatrix}\in M_7(\mathbb{F}).
$$
Here the action of $D$ corresponds to multiplying the matrix by a column on the right.
\end{theorem}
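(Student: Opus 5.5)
The plan is to exploit that $C$ is unital and commutative, which reduces everything to Proposition \ref{pr2} plus a short computation of the "left multipliers" of $C$. The intended answer is $\lambda\,id+D$ with $D$ a derivation, and I will prove both directions around that decomposition.

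\emph{Sufficiency.} Suppose the matrix of $F$ has the displayed form. Put $\alpha=\gamma-\lambda$ and $\beta=\mu-\lambda$. Then $F-\lambda\,id$ is the diagonal matrix with entries
$$(0,\ \alpha,\ \alpha+\beta,\ 0,\ \beta,\ 0,\ -(\alpha+\beta)).$$
For example, the $v_2$-entry is $\gamma+\mu-\lambda-\lambda=\alpha+\beta$, and the $p$-entry is $3\lambda-\gamma-\mu-\lambda=-(\alpha+\beta)$. By Proposition \ref{pr2}, $D:=F-\lambda\,id$ is a derivation. Hence
$$F(xy)=\lambda xy+D(x)y+xD(y)=F(x)y+xD(y)$$
for all $x,y\in C$, exactly as in the proof of Theorem \ref{th2}.

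\emph{Necessity.} Let $F$ be a generalized derivation with associated derivation $D$. Putting $x=1$ in $F(xy)=F(x)y+xD(y)$ gives $F(y)=ay+D(y)$ with $a:=F(1)$. Substituting this back into the defining identity and cancelling $D(xy)=D(x)y+xD(y)$ leaves
$$a(xy)=(ax)y \qquad \text{for all } x,y\in C.$$
So it suffices to show that this identity forces $a\in\mathbb{F}1$. Once that holds, $F=\lambda\,id+D$ with $\lambda=a_1$, and Proposition \ref{pr2} yields exactly the stated matrix by reversing the substitution $\gamma=\lambda+\alpha$, $\mu=\lambda+\beta$.

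\emph{Determining $a$.} This is the only real computation. Write $a=a_11+a_2v_1+\dots+a_7p$ and test the identity on well-chosen pairs of basis elements. For $x=v_2$, $y=p$ we have $xy=1$, so the left side is $a$. On the right side, the multiplication table (only $v_2e_{22}=v_2$ and $v_2p=1$ involve $v_2$ nontrivially) gives
$$av_2=(a_1+a_6)v_2+a_7\,1,\qquad (av_2)p=(a_1+a_6)\,1+a_7p.$$
Comparing coefficients gives $a_2=a_3=a_4=a_5=a_6=0$, so $a=a_1 1+a_7p$. Next take $x=p$, $y=v_2$. The left side is again $a$. Since $p^2=0$, the right side is $(a_1p)v_2=a_1 1$, which forces $a_7=0$.

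I expect no serious obstacle. The only care needed is reading the commutative multiplication table correctly, including the products involving the unit $1$ and $p^2=0$, when computing $av_2$ and $ap$.
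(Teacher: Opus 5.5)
Your proof is correct. Sufficiency is the same as in the paper: $F=\lambda\,id+D$, with $D$ a derivation by Proposition \ref{pr2}.

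For necessity you take a genuinely different route. The paper gives no separate argument for $C$. It refers to the proof of Theorem \ref{th2}, which is entrywise: it substitutes pairs of basis elements into $F(xy)=F(x)y+xD(y)$, with $D$ of the known diagonal form, and reads off the matrix of $F$ column by column.

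You instead use the unit. Setting $x=1$ gives $F=L_{F(1)}+D$, where $L_a(y)=ay$. This reduces everything to showing that $a(xy)=(ax)y$ for all $x,y$ forces $a\in\mathbb{F}1$. The two products $v_2p=pv_2=1$ settle that at once. Your computations $av_2=(a_1+a_6)v_2+a_7\,1$, $(av_2)p=(a_1+a_6)\,1+a_7p$ and $(a_1p)v_2=a_1\,1$ are right.

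What your route buys:
\begin{itemize}
\item It is shorter and avoids tracking all $49$ matrix entries.
\item It shows as a by-product that the derivation attached to $F$ is unique, namely $D=F-L_{F(1)}$.
\item It is what actually makes the ``similar'' argument work for $C$. The proof of Theorem \ref{th2} uses pairs $(x,e)$ with $e$ in the right slot. Transplanted to $C$ as $(x,1)$, these only give the trivial identity $F(x)=F(x)$, so the unit has to go in the left slot, as you do.
\end{itemize}
The paper's entrywise approach, on the other hand, does not need a unit. That is why it is the template for the non-unital algebra $D$.
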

\begin{proof}
    The proof of this theorem is similar to the proof of Theorem \ref{th2}
\end{proof}
Like in the case of algebra $D$, the same result holds for algebra $C$.
\begin{theorem}
  Each local (2-local) derivation on the simple algebra $C$ is a derivation.  
\end{theorem}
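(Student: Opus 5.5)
The plan is to reduce everything to the explicit description of $Der(C)$ in Proposition \ref{pr2}, in the same spirit as the proofs of Theorems \ref{th3} and \ref{th4}. By Proposition \ref{pr2}, every derivation of $C$ is diagonal in the basis (\ref{fr2}). It kills $1,e_{11},e_{22}$ and acts by $\alpha,\ \alpha+\beta,\ \beta,\ -(\alpha+\beta)$ on $v_1,v_2,e_{12},p$ respectively. So a derivation is determined by the pair $(\alpha,\beta)$, and these can be read off from its values on $v_1$ and $e_{12}$. The whole argument rests on this observation.

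\emph{Local case.} Let $\Delta$ be a linear local derivation. For each basis element $b$ there is a derivation $D_b$ with $\Delta(b)=D_b(b)$. This forces $\Delta(1)=\Delta(e_{11})=\Delta(e_{22})=0$ and
$$\Delta(v_1)=\alpha^{v_1}v_1,\quad \Delta(v_2)=(\alpha^{v_2}+\beta^{v_2})v_2,\quad \Delta(e_{12})=\beta^{e_{12}}e_{12},\quad \Delta(p)=-(\alpha^p+\beta^p)p.$$
Hence the matrix of $\Delta$ is diagonal, and only the relations between its diagonal entries remain to be checked. I will test $\Delta$ on $x=v_1+v_2+e_{12}+p$. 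By locality,
$$\Delta(x)=\alpha^x v_1+(\alpha^x+\beta^x)v_2+\beta^x e_{12}-(\alpha^x+\beta^x)p,$$
while by linearity $\Delta(x)=\Delta(v_1)+\Delta(v_2)+\Delta(e_{12})+\Delta(p)$. Comparing the $v_1$ and $e_{12}$ coefficients gives $\alpha^x=\alpha^{v_1}$ and $\beta^x=\beta^{e_{12}}$. Comparing the $v_2$ and $p$ coefficients then gives $\alpha^{v_2}+\beta^{v_2}=\alpha^{v_1}+\beta^{e_{12}}$ and $\alpha^p+\beta^p=\alpha^{v_1}+\beta^{e_{12}}$. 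Thus the matrix of $\Delta$ has the form (\ref{form2}) with $\alpha=\alpha^{v_1}$ and $\beta=\beta^{e_{12}}$, and Proposition \ref{pr2} shows that $\Delta$ is a derivation.

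\emph{2-local case.} Here $\Delta$ is not assumed linear, so I will avoid any appeal to additivity. Instead I fix the single element $a=v_1+e_{12}$; rather than letting $a$ run over basis elements, I want a single element that pins down the whole derivation. For every $x\in C$ choose a derivation $D_{x,a}$ with $\Delta(x)=D_{x,a}(x)$ and $\Delta(a)=D_{x,a}(a)$. Since $D_{x,a}(a)=\alpha^{x,a}v_1+\beta^{x,a}e_{12}$, the equality $D_{x,a}(a)=\Delta(a)=D_{y,a}(a)$ gives $\alpha^{x,a}=\alpha^{y,a}$ and $\beta^{x,a}=\beta^{y,a}$ for all $x,y$. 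So $D_{x,a}=:D_a$ is independent of $x$ by Proposition \ref{pr2}. Hence $\Delta(x)=D_a(x)$ for all $x$, and $\Delta$ is a derivation, which in particular gives its linearity.

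The main obstacle is the choice of test elements: with only basis elements, neither the compatibility of the diagonal entries (local case) nor the independence of $D_{x,a}$ from $x$ (2-local case) follows. Choosing $x$ and $a$ so that $\alpha,\beta$ are recoverable from a single value resolves both. The same argument, using Theorem \ref{th21} in place of Proposition \ref{pr2}, handles local and 2-local \emph{generalized} derivations of $C$. In that case one takes $x=1+v_1+v_2+e_{11}+e_{12}+e_{22}+p$ and $a=1+v_1+e_{12}$, since $\lambda,\gamma,\mu$ are read off from the coefficients of $1$, $v_1$ and $e_{12}$.
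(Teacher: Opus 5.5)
Your proof is correct. In the local case it follows the paper's method: locality on basis vectors forces $\Delta$ to be diagonal, and then you compare coefficients on a sum of basis vectors, just as in the proof of Theorem \ref{th3}. Since derivations have no $\lambda$-part, your single test element $v_1+v_2+e_{12}+p$ does the job of the paper's two test elements.

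In the 2-local case your route is genuinely different from the one the paper points to, and it is the one that actually closes. The paper fixes $a$ among the basis vectors and, from $F_{x,a}(a)=\Delta(a)=F_{y,a}(a)$, asserts $M^{x,a}=M^{y,a}$. But for a fixed basis vector this pins down only one parameter of $F_{x,a}$. For example, $a=v_1$ fixes $\alpha^{x,v_1}$ but leaves $\beta^{x,v_1}$ free. The list of equalities displayed in the proof of Theorem \ref{th4} mixes different choices of $a$, so independence of $F_{x,a}$ from $x$ does not follow.

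Your choice $a=v_1+e_{12}$ fixes exactly this. The map $(\alpha,\beta)\mapsto D(a)=\alpha v_1+\beta e_{12}$ is injective, so $\Delta(a)$ determines the derivation outright. This also gives linearity of $\Delta$ for free and shows that the 2-local condition with respect to one fixed element already suffices.

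Your remark on the generalized version is also right. With $a=1+v_1+e_{12}$ the triple $(\lambda,\gamma,\mu)$ of Theorem \ref{th21} is read off from $F(a)=\lambda 1+\gamma v_1+\mu e_{12}$. The local test element $1+v_1+v_2+e_{11}+e_{12}+e_{22}+p$ forces the required relations among the diagonal entries.
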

\begin{proof}
    The proof of this theorem is similar to the proofs of Theorems \ref{th3} and \ref{th4}.
\end{proof}
As in the case of algebra D, in algebra C as well, all biderivations are zero.
\begin{theorem}
    Any biderivations on $C$ is the zero map.
\end{theorem}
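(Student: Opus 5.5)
The plan is to argue directly from Proposition \ref{pr2}, without splitting $\delta$ into $\delta^+$ and $\delta^-$. Let $\delta$ be a biderivation of $C$. For each $x\in C$ the map $\delta(x,\cdot)$ is a derivation. So by Proposition \ref{pr2} its matrix has the form (\ref{form2}) with parameters $\alpha(x),\beta(x)$. Since $\delta$ is linear in $x$, the maps $x\mapsto\alpha(x)$ and $x\mapsto\beta(x)$ are linear functionals on $C$. Writing $\overline{y}=(y_1,\dots,y_7)^T$, we get
$$\delta(x,y)=\alpha(x)y_2\,v_1+(\alpha(x)+\beta(x))y_3\,v_2+\beta(x)y_5\,e_{12}-(\alpha(x)+\beta(x))y_7\,p .$$
In the same way, for each $y$ the map $\delta(\cdot,y)$ is a derivation, so it has the form (\ref{form2}) with linear functionals $\alpha'(y),\beta'(y)$:
$$\delta(x,y)=\alpha'(y)x_2\,v_1+(\alpha'(y)+\beta'(y))x_3\,v_2+\beta'(y)x_5\,e_{12}-(\alpha'(y)+\beta'(y))x_7\,p .$$

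Next I will compare the coefficients of the two expressions, starting with $v_1$ and $e_{12}$. The $v_1$-coefficients give $\alpha(x)y_2=\alpha'(y)x_2$ for all $x,y$. Taking $y=v_1$ shows $\alpha(x)=c\,x_2$ with $c=\alpha'(v_1)$, and then $\alpha'(y)=c\,y_2$. The $e_{12}$-coefficients give, in the same way, $\beta(x)=d\,x_5$ and $\beta'(y)=d\,y_5$ for some constant $d$.

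The decisive step, and the only place where something nontrivial happens, is the $v_2$-coefficient. It gives
$$(c\,x_2+d\,x_5)\,y_3=(c\,y_2+d\,y_5)\,x_3\qquad\text{for all }x,y\in C.$$
Substituting $x=v_1,\ y=v_2$ makes the left side $c$ and the right side $0$, so $c=0$. Substituting $x=e_{12},\ y=v_2$ gives $d=0$ in the same way. Hence $\alpha=\beta=0$, every $\delta(x,\cdot)$ is the zero derivation, and $\delta=0$. The $p$-coefficient gives no further condition.

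I expect no real obstacle. The only care needed is to use both slot-derivation conditions simultaneously, rather than imposing symmetry or skew-symmetry as in the proofs of Theorems \ref{thb1} and \ref{thb2}. The mixing between the $v_1$, $e_{12}$ and $v_2$ diagonal entries, that is the relation $\alpha+\beta$ in (\ref{form2}), is exactly what kills the two free constants. The multiplication table of $C$ is used only through Proposition \ref{pr2}. The same argument applies verbatim to $D$ via Proposition \ref{pr1}.
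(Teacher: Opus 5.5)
Your proof is correct and complete given Proposition \ref{pr2}, but it is organized differently from the paper's proof.

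The paper proceeds by reference to Theorems \ref{thb1} and \ref{thb2}. It first splits $\delta=\frac{1}{2}(\delta^{+}+\delta^{-})$. For each basis vector $a$ it writes $\delta^{\pm}(a,\cdot)$ via Proposition \ref{pr2} with its own scalars $\alpha^{a},\beta^{a}$. It then kills these fourteen unknowns using the relations $\delta^{\pm}(a,b)=\pm\delta^{\pm}(b,a)$ on selected pairs of basis vectors, plus $\delta^{-}(a,a)=0$ in the skew case. The derivation property in the second slot is never used directly; it is inherited from the (skew-)symmetry.

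You keep $\delta$ whole and apply Proposition \ref{pr2} in both slots, so the symmetry relation is replaced by the equality of the two expansions. The $v_1$- and $e_{12}$-coefficients reduce everything to two constants $c,d$, and the coupling $\alpha+\beta$ in the $v_2$-entry kills them.

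What you gain:
\begin{itemize}
\item a single pass instead of two;
\item no use of the splitting $BDer(C)=BDer_{+}(C)\oplus BDer_{-}(C)$;
\item immunity to a slip in the paper's symmetric case.
\end{itemize}
That symmetric case is written with the skew sign, $\delta^{+}(e,a)=-\delta^{+}(a,e)$. It also falls back on the skew-case relations, including those obtained from $\delta^{-}(a,a)=0$, which have no symmetric counterpart.

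The slip is repairable. For distinct basis vectors $a\neq b$ one has $\delta(a,b)\in\mathbb{F}b$ and $\delta(b,a)\in\mathbb{F}a$, so both vanish whatever the sign. The coupling $\alpha+\beta$ then forces the diagonal values $\delta(a,a)$ to vanish as well. This is exactly the mechanism your coefficient comparison uses, so the paper's route buys no extra generality.

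Your closing remark that the argument transfers verbatim to $D$ via Proposition \ref{pr1} is also correct.
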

\begin{proof}
    The proof of this theorem is similar to the proofs of Theorems \ref{thb1} and \ref{thb2}.
\end{proof}

\vspace{1cm}
\noindent \textbf{Acknowledgements:} The authors thank the referees for their valuable comments that contributed to a sensible improvement of the paper.

\section*{Declarations}
\noindent {\bf Conflict of interest statement.} Not applicable
\section*{Data availability}
The paper has no associated data.


\begin{thebibliography}{99}

 \bibitem{al}
 E. Albas, N. Arga¸c, Generalized derivation on prime rings, Algebra Colloq. 11 (2004)
 399–410.
 \bibitem{as}
 M. Ashraf, A. Ali, S. Ali, Some commutativity theorems for rings with generalized
 derivations, Southeast Asian Bull. Math. 31 (2007) 415–421.
 \bibitem{ayo2}
Ayupov, Sh. Kudaybergenov, K. Local derivations on finite-dimensional Lie algebras. Linear Algebra and its Applications 493 (2016) 381$–$398.

\bibitem{ayo3}
Ayupov, Sh. Kudaybergenov, K. Omirov, B. Local and 2-local derivations and automorphisms on simple Leibniz algebras. Bulletin of the Malaysian Mathematical Sciences Society 43 (3) (2020) 2199$–$2234.
\bibitem{bn}
Benayadi, S.  Oubba, H. Nonassociative algebras of biderivation-type. Linear Algebra and its Applications, 2024, vol. 701, p. 22-60.

\bibitem{br0}
 Bresar, Matej. "On the distance of the composition of two derivations to the generalized derivations." Glasgow Mathematical Journal 33.1 (1991): 89-93.
 \bibitem{Bres1}
Bre\v{s}ar, M.:
 Centralizing mapping and derivations in prime rings. J. Algebra. 156, (1993) 385$-$394.


\bibitem{BresMar2}
Bre\v{s}ar, M., Martindale 3rd, W.S., Miers, C.R.:
 Centralizing maps in prime rings with involution. J. Algebra. 161, (1993) 342$-$357.
 \bibitem{far}
   Farhodjon, Arzikulov, Urinboyev Furkat, and Ergasheva Shahlo. "A characterization of derivations and automorphisms on some simple algebras." Ural Mathematical Journal 8.2 (15) (2022): 46-58.

   \bibitem{isa1}
   Isaev, I. M., and A. V. Kislitsin. "Example of simple finite dimensional algebra with no finite basis of its identities." Communications in Algebra 41.12 (2013): 4593-4601.
   \bibitem{isa2}
   Isaev, I. M., and A. V. Kislitsin. "Example of simple finite dimensional algebra with no finite basis of its identities." Communications in Algebra 41.12 (2013): 4593-4601.
\bibitem{kis1}
 Kislitsin, Alexey Vladimirovich. "An example of a central simple commutative finite-dimensional algebra with an infinite basis of identities." Algebra and Logic 54.3 (2015): 204-210.
\bibitem{kis2}
 Kislitsin, A. V. (2017). Simple finite-dimensional algebras without finite basis of identities. Siberian Mathematical Journal, 58(3), 461-466.



\bibitem{hvala}
Hvala, Generalized derivations in rings. Commun Algebra
 26(4):1147–1166
 
\bibitem{kad}
Kadison, R.V. Local derivations, Journal of Algebra, 130, (1990), 494-509.


\bibitem{lee}
Lee TK(1999) Generalized derivations of left faithful rings. Commun Algebra 27(8):4057–4073
\bibitem{lee1}
 Lee TK, Shiue WK (2001) Identities with generalized derivations. Commun Algebra 29(10):4437–4450



 \bibitem{Lar}
 Larson, D.R. Sourour, A.R Local derivations and local automorphisms of B(X), Proceedings of Symposia in Pure Mathematics, 51 (1990), 187-194.

\bibitem{houb}
Oubba, H. Biderivations of some evolution algebras. Asian-European Journal of Mathematics (2026): 2650050.

\bibitem{oub}
Oubba, H. Generalized quaternion algebras. Rendiconti del Circolo Matematico di Palermo Series 2, 2023, vol. 72, no 8, p. 4239-4250.
\bibitem{oub1}
Oubba, H. Biderivations, commuting linear maps, post-Lie algebra structure on solvable Lie algebras of maximal rank. Communications in Algebra, 2024, p. 1-10.
\bibitem{oub2}
Oubba, Hassan. "Local (2-Local) derivations and automorphisms and biderivations of complex $\omega$-Lie algebras." Le Matematiche 79.1 (2024): 135-150.
\bibitem{oub3}
Bajo, Ignacio, Saïd Benayadi, and Hassan Oubba. "Non-existence of Symmetric Biderivations on Finite-Dimensional Perfect Lie Algebras: I. Bajo et al." Mediterranean Journal of Mathematics 22.6 (2025): 162.
\bibitem{oub4}
Oubba H.  On dual quaternion algebra.  Commun. Korean Math. Soc. 2025;40:763-776.  https://doi.org/10.4134/CKMS.c240275







\bibitem{sem}
 Semrl P., Local automorphisms and derivations on  B(H), Proceedings of the American Mathematical Society, 125 (1997), 2677-2680.




\bibitem{pos}
Posner, Edward C. "Derivations in prime rings." Proceedings of the American Mathematical Society 8.6 (1957): 1093-1100.
\bibitem{Tang}
Tang, X.:
 Biderivations of finite dimensional complex simple Lie algebras. Linear and Multilinear Algebra. 66,  (2018) 250$-$259.
\bibitem{Wang1}
Wang, D., Yu, X.:
 Biderivations and Linear commuting maps on the Schr\"{o}dinger-Virasoro Lie algebras , Comm. Algebra. 41,  (2013) 2166$-$2173
\bibitem{Wang2}
Wang, D., Yu, X., Chen, Z.:
 Biderivations of the parabolic subalgebra of simple Lie algebras, Comm. Algebra. 39,  (2011) 4097$-$4104



\end{thebibliography}
\end{document}